\newtheorem{theorem}{Theorem}[section]
\newtheorem{lemma}[theorem]{Lemma}
\theoremstyle{definition}
\theoremstyle{definitions}
\theoremstyle{notations}
\theoremstyle{remarks}
\journal{Topology and its Applications}
\begin{document}

\begin{frontmatter}



\title{Burnside Condition on Some  Intersection Subgroups}


\author[]{Hanieh~Mirebrahimi\corref{cor1}}
\ead{h\_mirebrahimi@um.ac.ir}
\author[]{Fateme~Ghanei}
\ead{fatemeh.ghanei91@gmail.com}

\address{Department of Pure Mathematics, Center of Excellence in Analysis on Algebraic Structures, Ferdowsi University of Mashhad,\\
P.O.Box 1159-91775, Mashhad, Iran.}
\cortext[cor1]{Corresponding author}

\begin{abstract}
In this paper, using the notions  graphs, core graphs, immersions and covering maps of graphs, introduced by Stallings in 1983, we prove the Burnside condition for the intersection of subgroups of free groups with Burnside condition.
\end{abstract}

\begin{keyword}
Graph\sep Fundamental group\sep Immersion and Covering theory\sep Burnside condition.
\MSC[2010]  05E15\sep 05E18\sep 55Q05\sep 57M10

\end{keyword}

\end{frontmatter}

\section{Introduction and Motivation}
 In \cite{St} J. Stallings  studied on  free  groups by theory of graphs. He introduced the concept of immersions of graphs,  provided an algorithmic  process  to study on finitely generated subgroups of free groups. Using these tools, he also gave an elegant proof for Howson's theorem  "if $A$ and $B$ are finitely generated subgroups of a free group, then $A\cap B$ is finitely generated". Moreover,  using immersions of graphs and   core graphs (graphs  with  no  trees  hanging  on)  some mathematicians such as Everitt  and  Gersten  studied on    H. Neumann's  inequality on the rank of $A\cap B$ (see \cite{E}  and \cite{G}). 
 Stallings also in \cite{St}, introduced another notation called "Burnside condition for a subgroup". 
In this paper, we focus on this notion, and using similar methods,  we  prove that 
if  $A$  and  $B$  are  finitely  generated  subgroups  of 
a  free  group $F$, and    $A \cap  B$  satisfies the Burnside condition  in  both  $A$  and  $B$,  then  $A \cap  B$  satisfies the Burnside condition  in  $A \vee B$,  the  subgroup of $F$ generated  by  $A \cup B$. 
\section{Preliminaries}
In this section, all our notations come from \cite{St}.
 A graph $X$ consists of two sets $E$ and $V$ (\textit{edges} and \textit{vertices}), with three functions $^{-1}:E\longrightarrow E$ and $s,t:E\longrightarrow V$ such that $(e^{-1})^{-1}=e$, $e^{-1}\neq e$, $s(e^{-1})=t(e)$ and $t(e^{-1})=s(e)$. We say that the edge $e\in E$ has \textit{initial vertex} $s(e)$ and \textit{terminal vertex} $t(e)$. The edge $e^{-1}$ is also called the \textit{reverse} of $e$.  

A \textit{map of graphs} $f:X\longrightarrow Y$ is a function which maps edges to edges and vertices to vertices. Also we have $f(e^{-1})=f(e)^{-1}$, $f(s(e))=s(f(e))$ and $f(t(e))=t(f(e))$.

A \textit{path} $p$ in $X$ of length $n=\vert p\vert$, with initial vertex $u$ and terminal vertex $v$, is an $n$-tuple of edges of $X$ of the form $p=e_{1}...e_{n}$ such that for $i=1,...,n-1$, we have $t(e_{i})=s(e_{i+1})$ and $s(e_{1})=u$ and $t(e_{n})=v$. For $n=0$, given any vertex $v$, there is a unique path $\Lambda_{v}$ of length $0$ whose initial and terminal vertices coincide and are equal to $v$. A path $p$ is called a \textit{circuit} if its initial and terminal vertices coincide.

If $p$ and $q$ are paths in $X$ and the terminal vertex of $p$ equals the initial vertex of $q$, they may be \textit{concatenated} to form a path $pq$ with $\vert pq\vert=\vert p\vert+\vert q\vert$, whose initial vertex is that of $p$ and whose terminal vertex is that of $q$.

A \textit{round-trip} is a path of the form $ee^{-1}$. A \textit{reduced path} is a path in $X$ containing no round-trip. An \textit{elementary reduction} is insertion or deletion a round-trip in a path. Two paths $p$ and $q$ are \textit{homotopic} (written $p\sim q$) iff there is a finite sequence of elementary reductions taking one path to the other. Homotopic paths must have the same start and terminal vertices and also, homotopy is an equivalence relation on the set of paths with same start and same terminal vertices in $X$. Moreover, any path in $X$ is homotopic to a unique reduced path in $X$.

Let $v$ be a fix vertex in $X$, $\pi_{1}(X,v)$ is  defined to be the set of all homotopy classes of closed paths with initial and terminal vertex $v$. Then $\pi_{1}(X,v)$ together with the product $[p][q]:=[pq]$ forms a group with identity $[\Lambda_{v}]$ and inverse element $[p]^{-1}=[p^{-1}]$. 

For a fix vertex $v$ in $X$, the \textit{star} of $v$ in $X$ is defined as follows:
\begin{center}
$St(v,X)=\lbrace e\in E : s(e)=v\rbrace$.
\end{center}

A map $f:X\longrightarrow Y$ yields, for each vertex $v\in X$, a function $f_{v}:St(v,X)\longrightarrow St(f(v),Y)$. If for each vertex $v\in X$, $f_{v}$ is injective, we call $f$ an \textit{immersion} and if for each vertex $v\in X$, $f_{v}$ is surjective, we call $f$ a \textit{locally surjective}. If each $f_{v}$ is bijective, we call $f$ a \textit{covering map} . 

The theory of coverings of graphs is almost completely analogous to the topological theory of coverings. Immersions have some of the properties of coverings, one of them  we  need it more, is the following one: 

"For a given finite set of elements $\lbrace \alpha_{1},...,\alpha_{n}\rbrace\subseteq \pi_{1}(X,u)$, there is a connected graph $Y$ and an immersion $f:Y\longrightarrow X$ such that $f_{*}(\pi_{1}(Y))=S$, in which $S$ is the subgroup of $\pi_{1}(X,u)$ generated by $\lbrace\alpha_{1},...,\alpha_{n}\rbrace$".

If $G$ is a group, a $G$-\textit{graph} $X$ is a graph with an action of $G$ on the left on $X$ by maps of graphs, such that for all $g\in G$ and every edge $e$, $ge\neq e^{-1}$. In this case, the quotient graph $X/G$, and the natural quotient map of graphs $q:X\rightarrow X/G$ are defined. It is easy to see that, in general $q:X\rightarrow X/G$ is locally surjective. 

We call $G$ \textit{acts freely} on $X$, whenever $v$ is a vertex of $X$, $g\in G$, and $gv=v$, then $g=1$, the identity element of $G$. In this case, $q:X\rightarrow X/G$ is an immersion, and hence is a covering map.

A \textit{translation} of a map of graphs $f:X\rightarrow Y$ is a map $g:X\rightarrow X$ which is an isomorphism of graphs and for which $fg=f$. The set of all translations of $f$ forms a group $G(f)$ which acts on $X$. If $f$ is an immersion, and $X$ is connected, then $G(f)$ acts freely on $X$.

For a connected graph $X$, the \textit{universal cover} $f:\tilde{X}\rightarrow X$ is a covering map with $\tilde{X}$ is connected and $\pi_{1}(\tilde{X})$ is trivial.

\begin{lemma}\label{le}\cite{St}
 If $f:\tilde{X}\rightarrow X$ is a universal covering map, then $G(f)\cong\pi_{1}(X)$ which acts freely, by covering translations, on $\tilde{X}$, and also $f$ is isomorphic to the quotient map $q:\tilde{X}\rightarrow \tilde{X}/G$, i.e., there exists an isomorphism $\varphi:\tilde{X}/G\rightarrow X$ such that $\varphi q=f$.
\end{lemma}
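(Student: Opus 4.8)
The plan is to follow the classical template of covering space theory, transported into the combinatorial setting of Stallings graphs. Fix a vertex $\tilde{u}$ of $\tilde{X}$ and set $u=f(\tilde{u})$. The freeness of the $G(f)$-action is already available: $f$ is a covering map, hence an immersion, and $\tilde{X}$ is connected, so by the property recalled just before the statement $G(f)$ acts freely on $\tilde{X}$; consequently $q:\tilde{X}\to\tilde{X}/G(f)$ is an immersion, hence a covering map. It remains to build an isomorphism $G(f)\cong\pi_{1}(X)$ and to identify $f$ with $q$.

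To define $\Phi:G(f)\to\pi_{1}(X,u)$: given $g\in G(f)$, the identity $fg=f$ forces $f(g\tilde{u})=u$, so $g\tilde{u}$ lies in the fibre $f^{-1}(u)$; choose, using connectedness of $\tilde{X}$, a path $\tilde{p}$ from $\tilde{u}$ to $g\tilde{u}$ and put $\Phi(g)=[f(\tilde{p})]\in\pi_{1}(X,u)$. Well-definedness is where simple-connectedness of $\tilde{X}$ enters: two such paths are homotopic in $\tilde{X}$ because $\pi_{1}(\tilde{X})$ is trivial, hence their $f$-images are homotopic in $X$. For the homomorphism property I would take paths $\tilde{p}$ from $\tilde{u}$ to $g\tilde{u}$ and $\tilde{q}$ from $\tilde{u}$ to $h\tilde{u}$, observe that $\tilde{p}\,(g\tilde{q})$ runs from $\tilde{u}$ to $gh\tilde{u}$, and use $fg=f$ to get $f\big(\tilde{p}\,(g\tilde{q})\big)=f(\tilde{p})\,f(\tilde{q})$, so $\Phi(gh)=\Phi(g)\Phi(h)$.

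Injectivity and surjectivity of $\Phi$ both rest on the path- and homotopy-lifting properties of covering maps of graphs: a path with prescribed initial vertex lifts uniquely, and homotopic paths in $X$ lift to paths in $\tilde{X}$ with the same terminal vertex. For injectivity, if $\Phi(g)=[\Lambda_{u}]$ then the lift of $f(\tilde{p})$ at $\tilde{u}$, which is $\tilde{p}$ itself, must end where the lift of $\Lambda_{u}$ ends, namely at $\tilde{u}$; hence $g\tilde{u}=\tilde{u}$ and freeness gives $g=1$. For surjectivity, given $[\gamma]\in\pi_{1}(X,u)$ lift $\gamma$ at $\tilde{u}$ to a path ending at some $\tilde{v}\in f^{-1}(u)$, and produce a translation $g$ of $f$ with $g\tilde{u}=\tilde{v}$. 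Constructing this $g$ is the technical heart: one defines $g(\tilde{w})$, for a vertex or edge $\tilde{w}$, by taking a path from $\tilde{u}$ to $\tilde{w}$, applying $f$, lifting the result at $\tilde{v}$, and reading off its endpoint, with well-definedness again supplied by $\pi_{1}(\tilde{X})=1$; the inverse of $g$ is built the same way with the roles of $\tilde{u}$ and $\tilde{v}$ exchanged, and one checks $g$ is a map of graphs with $fg=f$. This simultaneously shows that $G(f)$ acts transitively on every fibre of $f$.

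Finally, to identify $f$ with $q$: since $fg=f$ for every $g\in G(f)$, the map $f$ is constant on $G(f)$-orbits, so it factors uniquely as $f=\varphi q$ for a map of graphs $\varphi:\tilde{X}/G(f)\to X$. Surjectivity of $\varphi$ follows from that of $f$ (a covering map from the connected graph $\tilde{X}$ onto the connected $X$), and injectivity of $\varphi$ on vertices and on edges follows from transitivity of $G(f)$ on fibres: if $\varphi(q(\tilde{w}_{1}))=\varphi(q(\tilde{w}_{2}))$ then $f(\tilde{w}_{1})=f(\tilde{w}_{2})$, so some $g\in G(f)$ sends $\tilde{w}_{1}$ to $\tilde{w}_{2}$ and $q(\tilde{w}_{1})=q(\tilde{w}_{2})$. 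A bijective map of graphs is an isomorphism, so $\varphi$ is the required isomorphism with $\varphi q=f$. I expect the main obstacle to be the explicit construction and well-definedness check for the translations in $G(f)$ (equivalently, transitivity of $G(f)$ on fibres), since everything else is a fairly mechanical transcription of the standard covering-space arguments into the language of graphs, relying on unique path lifting, homotopy lifting, and $\pi_{1}(\tilde{X})=1$.
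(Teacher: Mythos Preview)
The paper does not prove this lemma at all: it is stated with a citation to \cite{St} and no argument is given, so there is nothing to compare against. Your proposal is the standard covering-space argument (define $\Phi$ by lifting loops, use $\pi_{1}(\tilde{X})=1$ for well-definedness, unique path and homotopy lifting for injectivity and surjectivity, then factor $f$ through $q$ via transitivity on fibres) and it is correct as written; the one place worth a sentence of extra care is that transitivity of $G(f)$ on \emph{edge} fibres follows from transitivity on vertex fibres together with the fact that $f$ is an immersion, so an edge in a fibre is determined by its initial vertex.
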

\begin{theorem}\label{cap}\cite{St}
Let 
\begin{equation*}
\label{dia}\begin{CD}
Z_{3}@>g_{1}>>Z_{1}\\
@VV g_{2}V@V f_{1}VV\\
Z_{2}@>f_{2}>>X.
\end{CD}\end{equation*}\\
be a pullback diagram of graphs, where $f_{1}$ and $f_{2}$ are immersions. Let $v_1$ and $v_2$ be vertices in $Z_1$ and $Z_2$ that $f_1(v_1)=f_2(v_2)=w$. Let $v_3$ be corresponding vertex in $Z_3$. Define $f_{3}=f_{1}g_{1}=f_{2}g_{2}:Z_3\rightarrow X$, and $S_{i}=f_{i_{*}}(\pi_{1}(Z_{i},v_{i}))$, for $i=1,2,3$. Then $S_{3}=S_{1}\cap S_{2}$.
\end{theorem}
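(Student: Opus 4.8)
The plan is to replace the pullback by its concrete fibre-product model and then argue entirely in terms of reduced paths. Since a pullback is unique up to isomorphism, I may assume that $Z_3$ has vertex set $\{(a,b): a\in V(Z_1),\, b\in V(Z_2),\, f_1(a)=f_2(b)\}$ and edge set $\{(e,e'): f_1(e)=f_2(e')\}$, with the involution and the maps $s,t$ acting coordinatewise and with $g_1,g_2$ the two projections; then $v_3=(v_1,v_2)$ (a legitimate vertex precisely because $f_1(v_1)=f_2(v_2)=w$), $g_1(v_3)=v_1$ and $g_2(v_3)=v_2$. The one lemma worth establishing up front is that an immersion carries reduced paths to reduced paths: if $p=e_1\cdots e_n$ is a path in $Z_i$ and $f_i(p)$ contains a round-trip, i.e. $f_i(e_{k+1})=f_i(e_k)^{-1}=f_i(e_k^{-1})$ for some $k$, then $e_{k+1}$ and $e_k^{-1}$ lie in the same star $St(s(e_{k+1}),Z_i)$ and have equal images, so $e_{k+1}=e_k^{-1}$ by star-injectivity and $p$ already contained a round-trip. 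A similar star-injectivity argument shows $f_3=f_1g_1$ is itself an immersion, so $f_{3*}$ is injective and $\pi_1(Z_3,v_3)\cong S_3$, though I will not need that. Combining the lemma with the uniqueness of reduced representatives gives the working criterion I will apply for $i=1,2$: an element $[\alpha]\in\pi_1(X,w)$ lies in $S_i$ if and only if the reduced representative $p$ of $\alpha$ equals $f_i(p_i)$ for some (automatically reduced) closed path $p_i$ based at $v_i$.

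For the inclusion $S_3\subseteq S_1\cap S_2$ I would argue directly: if $[\gamma]=f_{3*}[q]$ with $q$ a closed path at $v_3$, then $g_1(q)$ and $g_2(q)$ are closed paths at $v_1$ and $v_2$ respectively, and $f_1(g_1(q))=f_3(q)=f_2(g_2(q))$, so $[\gamma]=f_{1*}[g_1(q)]\in S_1$ and $[\gamma]=f_{2*}[g_2(q)]\in S_2$.

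The substantive direction is $S_1\cap S_2\subseteq S_3$, and here the idea is to pair the two canonical lifts of a single reduced loop edge by edge. Given $[\alpha]\in S_1\cap S_2$, let $p=\bar e_1\cdots\bar e_n$ be its reduced representative, a closed path at $w$. By the criterion above there are reduced closed paths $p_1=e^{(1)}_1\cdots e^{(1)}_n$ at $v_1$ and $p_2=e^{(2)}_1\cdots e^{(2)}_n$ at $v_2$ with $f_1(p_1)=p=f_2(p_2)$; in particular both lifts have length $n$ and $f_1(e^{(1)}_k)=\bar e_k=f_2(e^{(2)}_k)$ for every $k$. Then $p_3=(e^{(1)}_1,e^{(2)}_1)\cdots(e^{(1)}_n,e^{(2)}_n)$ is a genuine edge-path in $Z_3$: each pair is an edge of $Z_3$ because $f_1(e^{(1)}_k)=f_2(e^{(2)}_k)$, consecutive pairs concatenate because $p_1$ and $p_2$ are paths, and $p_3$ is closed at $(v_1,v_2)=v_3$. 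Since $f_3(p_3)=f_1(g_1(p_3))=f_1(p_1)=p$, we obtain $[\alpha]=[p]=f_{3*}[p_3]\in S_3$ (the case $n=0$ being handled by $p_3=\Lambda_{v_3}$). Combining the two inclusions gives $S_3=S_1\cap S_2$.

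The only point I expect to require care is the lemma that immersions send reduced paths to reduced ones, together with the membership criterion derived from it; once these are in place, both inclusions are essentially formal — in particular, the reverse inclusion never needs a lifting-of-homotopies argument, only the edgewise pairing of two honest lifts of the same reduced path.
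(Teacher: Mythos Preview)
The paper does not supply its own proof of this theorem; it is quoted from Stallings \cite{St} with a citation and no proof environment, so there is nothing in the paper to compare against.

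Your argument is correct. The key ingredients --- the explicit fibre-product model of the pullback, the lemma that immersions send reduced paths to reduced paths (hence induce injections on $\pi_1$ and admit unique path lifting in the relevant sense), and the resulting membership criterion via reduced representatives --- are exactly the standard tools, and your edgewise pairing of the two lifts $p_1$ and $p_2$ into a path $p_3$ in $Z_3$ is the right way to establish the nontrivial inclusion $S_1\cap S_2\subseteq S_3$. This is essentially Stallings' own proof (his Theorem~5.5), phrased in terms of reduced representatives rather than unique path lifting, but the content is the same.
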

\section{Main results}
In this section, we deduce our main result. Before it, we recall some notes from \cite{St} which are essential in our proof. First, we note to the \textit{core graphs} whose roles are more important. 

A \textit{cyclically reduced circuit} in a graph $X$ is a circuit $p=e_{1}...e_{n}$, which is reduced as a path and for which $e_{1}\neq e_{n}^{-1}$. A graph $X$ is said to be a \textit{core-graph} if $X$ is connected, has at least one edge and each of its edges belongs to at least one cyclically reduced circuit. 

In   a connected graph $X$ with  non-trivial fundamental group, an \textit{essential edge} is an edge belonging to some cyclically reduced circuit. The \textit{core} of $X$ consists of all essential edges of $X$ and all initial vertices of essential edges. 

If $X$ is a connected graph with non-trivial fundamental group, then the core $X^{\prime}$ of  $X$ is a  core-graph. If $v$ is a vertex of $X^{\prime}$, then the homomorphism induced by  inclusion, $\pi_{1}(X^{\prime},v)\longrightarrow \pi_{1}(X,v)$ is an isomorphism.

Another notion, we are dealing with, is the \textit{Burnside condition} for subgroups. A subgroup $S$  of a group $G$  satisfies the \textit{Burnside condition} if for every $g\in G$, there exists some positive integer $n$ such that $g^{n}\in S$. 

\begin{lemma}\label{im}\cite{St}
$(a)$ Let $f:X\longrightarrow Y$ be a finite-sheeted covering of connected graphs, and $v$ a vertex of $X$. Then $f_{*}(\pi_{1}(X,v))\subseteq \pi_{1}(Y,f(v))$ satisfies the Burnside condition.

$(b)$ Let $f:X\longrightarrow Y$ be an immersion of connected graphs,  $Y$ be a core-graph, $v$ a vertex of $X$, and $f_{*}(\pi_{1}(X,v))\subseteq \pi_{1}(Y,f(v))$ satisfy the Burnside condition. Then $f$ is a covering map.
\end{lemma}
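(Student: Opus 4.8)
To prove Lemma~\ref{im}, the plan is to handle the two parts by quite different routes: part $(a)$ will follow from standard covering theory together with an orbit‑counting (pigeonhole) argument, while part $(b)$ calls for an explicit construction exploiting the hypothesis that $Y$ is a core‑graph. For $(b)$ I will argue contrapositively, manufacturing from a failure of local surjectivity an element of $\pi_{1}(Y,f(v))$ no positive power of which lies in the image subgroup.

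For $(a)$: since $f\colon X\to Y$ is an $n$‑sheeted covering and $Y$ is connected, the fibre $F=f^{-1}(f(v))$ has exactly $n$ elements, and $v\in F$. Path‑lifting assigns to each loop $\sigma$ at $f(v)$ a permutation of $F$ --- sending $u\in F$ to the endpoint of the lift of $\sigma$ starting at $u$ --- which depends only on $[\sigma]$, giving a homomorphism $\mu\colon\pi_{1}(Y,f(v))\to\mathrm{Sym}(F)$. Given $g\in\pi_{1}(Y,f(v))$, the $\langle\mu(g)\rangle$‑orbit of $v$ has size $k\le\vert F\vert=n$, so $\mu(g^{k})=\mu(g)^{k}$ fixes $v$; unwinding the definition, a loop representing $g^{k}$ lifts to a \emph{loop} at $v$, whence $g^{k}\in f_{*}(\pi_{1}(X,v))$. (Equivalently, $[\pi_{1}(Y,f(v)):f_{*}(\pi_{1}(X,v))]=n$, and the $n+1$ cosets $g^{0}H,\dots,g^{n}H$ cannot all be distinct.) Either way the Burnside condition holds with exponent $\le n$.

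For $(b)$, I first record two facts about an immersion $f$. (i) The $f$‑image of a reduced path is reduced: a backtrack $f(\tilde e_{i})=f(\tilde e_{i+1})^{-1}$ with $t(\tilde e_{i})=s(\tilde e_{i+1})=w$ would force $\tilde e_{i+1}=\tilde e_{i}^{-1}$ by injectivity of $f$ on $St(w,X)$. (ii) A lift of a reduced path is unique wherever defined. From (i) and (ii), if $[\gamma]\in H:=f_{*}(\pi_{1}(X,v))$ then the reduced representative of $\gamma$ lifts to a loop at $v$ in $X$. Now assume $f$ is not a covering, i.e.\ not locally surjective, and pick a vertex $u_{0}\in X$ together with an edge $\bar e_{0}\in St(f(u_{0}),Y)\setminus f_{u_{0}}(St(u_{0},X))$. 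Choose a reduced path $\tilde p$ in $X$ from $v$ to $u_{0}$ and set $p=f(\tilde p)$; by (i) this is a reduced path in $Y$ from $f(v)$ to $f(u_{0})$, and its last edge differs from $\bar e_{0}^{-1}$ (the inverse of that last edge is $f$ of an edge of $St(u_{0},X)$, a set that misses $\bar e_{0}$). Since $Y$ is a core‑graph, $\bar e_{0}$ lies on some cyclically reduced circuit, and after a cyclic permutation we may take it to be a cyclically reduced circuit $c$ based at $f(u_{0})$ that begins with $\bar e_{0}$.

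The candidate element is $g=[\,p\,c^{n_{0}}\,p^{-1}\,]$ for a fixed $n_{0}>\vert p\vert$. For every $k\ge1$, $g^{k}=[\,p\,c^{n_{0}k}\,p^{-1}\,]$; as $c$ is cyclically reduced and the $p\mid c^{n_{0}k}$ junction is reduced by the choice of $p$, the only cancellation in the word $p\,c^{n_{0}k}\,p^{-1}$ occurs at the rightmost $c^{n_{0}k}\!\mid p^{-1}$ junction and removes at most $\vert p\vert$ letters from each side. Because $n_{0}k\,\vert c\vert>\vert p\vert$, this cancellation reaches neither the prefix $p$ nor the first letter of $c^{n_{0}k}$, so the reduced representative of $g^{k}$ begins with $p$ followed immediately by an edge lying over $\bar e_{0}$. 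Any lift of this word starting at $v$ would thus have to run along $\tilde p$ to $u_{0}$ (uniquely, by (ii)) and then lift $\bar e_{0}$ at $u_{0}$ --- impossible; so this reduced word does not lift to a path at $v$ at all, and $g^{k}\notin H$. Hence $H$ fails the Burnside condition, contradicting the hypothesis, and $f$ is a covering map. I expect the genuinely delicate step to be exactly this bookkeeping --- passing to the power $c^{n_{0}}$ rather than $c$ itself, so that the missing edge $\bar e_{0}$ is guaranteed to survive in a predictable position of \emph{every} power $g^{k}$ --- while the core‑graph hypothesis is precisely what makes a circuit such as $c$ available in the first place.
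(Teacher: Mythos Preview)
The paper does not supply its own proof of this lemma; it is quoted verbatim from Stallings~\cite{St}, so there is nothing in the paper to compare against. Your argument is correct and is essentially the one Stallings gives: part~$(a)$ via the monodromy action on the finite fibre (equivalently, finite index), and part~$(b)$ by contraposition, using the core-graph hypothesis to run a cyclically reduced circuit through the missing edge $\bar e_{0}$ and conjugating it back to the basepoint; your device of passing to $c^{n_{0}}$ with $n_{0}>\vert p\vert$ so that $\bar e_{0}$ provably survives in the reduced form of every power is a clean way to handle the cancellation bookkeeping that Stallings leaves implicit. One cosmetic point: your sentence ``its last edge differs from $\bar e_{0}^{-1}$'' presumes $\tilde p$ has positive length, i.e.\ $u_{0}\neq v$; when $u_{0}=v$ the path $p$ is empty and the argument only simplifies (take $g=[c]$ directly), so this is not a real gap.
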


Finally, using all the above notes, we establish our main result in the follow.

\begin{theorem}
Let $S_1$ and $S_2$ be finitely generated subgroups of a free group $F$. If $S_1\cap S_2$ satisfies the Burnside condition both in $S_1$ and $S_2$, then $S_1\cap S_2$  also satisfies the Burnside condition in  $S_1\vee S_2$, the subgroup generated by $S_{1}\cup S_{2}$. 
\end{theorem}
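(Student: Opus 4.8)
The plan is to realize $F$ as the deck group of a tree, to show that the two Burnside hypotheses force $S_{1}$, $S_{2}$ and $S_{1}\cap S_{2}$ to share one and the same minimal invariant subtree, and then to observe that $S_{1}\vee S_{2}$ must still act cocompactly on that tree, so that $S_{1}\cap S_{2}$ has finite index in it. First dispose of the degenerate case: if $S_{1}\cap S_{2}=1$, the Burnside condition in $S_{1}$ makes every element of $S_{1}$ torsion, hence $S_{1}=1$ since subgroups of free groups are torsion-free, and then $S_{1}\vee S_{2}=S_{2}=1$ by the same argument in $S_{2}$, so the conclusion is trivial; thus assume $S_{1},S_{2},S_{1}\cap S_{2}$ are all non-trivial (they are finitely generated, $S_{1}\cap S_{2}$ by Howson's theorem and $S_{1}\vee S_{2}$ obviously). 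Write $F=\pi_{1}(X,u)$ for a connected graph $X$, let $f\colon\tilde X\to X$ be the universal cover (so $\tilde X$ is a tree), and, via Lemma~\ref{le}, identify $F$ with $G(f)$ acting freely by translations on $\tilde X$. For non-trivial finitely generated $H\le F$ let $T_{H}\subseteq\tilde X$ be the minimal $H$-invariant subtree; then $\Gamma_{H}:=T_{H}/H$ is the core of the cover $\tilde X/H\to X$, hence a finite core-graph, $H\le K$ gives $T_{H}\subseteq T_{K}$, and since $F$ is torsion-free each $1\neq g\in F$ translates along a unique axis $A_{g}$ with $A_{g^{k}}=A_{g}$ for all $k\neq0$, while $T_{H}=\bigcup_{1\neq h\in H}A_{h}$.

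The key point is the identity $T_{S_{1}}=T_{S_{1}\cap S_{2}}$ and its mirror $T_{S_{2}}=T_{S_{1}\cap S_{2}}$. The inclusion $T_{S_{1}\cap S_{2}}\subseteq T_{S_{1}}$ is automatic. For the reverse, given $1\neq s\in S_{1}$ the Burnside condition in $S_{1}$ yields $k\geq1$ with $s^{k}\in S_{1}\cap S_{2}$, and then $A_{s}=A_{s^{k}}\subseteq T_{S_{1}\cap S_{2}}$ because any non-empty $\langle s^{k}\rangle$-invariant subtree contains the axis of $s^{k}$; taking the union over $s$ gives $T_{S_{1}}\subseteq T_{S_{1}\cap S_{2}}$. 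Graph-theoretically this says that the immersion $\Gamma_{S_{1}\cap S_{2}}\to\Gamma_{S_{1}}$ representing the inclusion $S_{1}\cap S_{2}\le S_{1}=\pi_{1}(\Gamma_{S_{1}})$ — obtained, if one likes, from Theorem~\ref{cap} by realizing $\Gamma_{S_{1}\cap S_{2}}$ as the core of a connected component of the pullback $\Gamma_{S_{1}}\times_{X}\Gamma_{S_{2}}$ and using the projection to $\Gamma_{S_{1}}$ — is a covering map by Lemma~\ref{im}$(b)$, since $\Gamma_{S_{1}}$ is a core-graph over which the image satisfies the Burnside condition, and it lifts to an isomorphism of universal covers; likewise for $S_{2}$.

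Put $T_{0}:=T_{S_{1}}=T_{S_{2}}=T_{S_{1}\cap S_{2}}$. It is invariant under $S_{1}$ and $S_{2}$, hence under $S_{1}\vee S_{2}=\langle S_{1}\cup S_{2}\rangle$, which acts freely on it, and the natural map $\Gamma_{S_{1}\cap S_{2}}=T_{0}/(S_{1}\cap S_{2})\to T_{0}/(S_{1}\vee S_{2})$ is a covering of graphs with finite source, so its target is a finite connected graph and the covering is finite-sheeted with exactly $[S_{1}\vee S_{2}:S_{1}\cap S_{2}]$ sheets; in particular this index is finite. Up to the identification of Lemma~\ref{le} this last map is the Stallings immersion $\Gamma_{S_{1}\cap S_{2}}\to\Gamma_{S_{1}\vee S_{2}}$, now revealed to be a finite-sheeted covering of core-graphs, so Lemma~\ref{im}$(a)$ gives that $S_{1}\cap S_{2}$ satisfies the Burnside condition in $S_{1}\vee S_{2}$ — or, just as well, one invokes the pigeonhole principle on the finitely many cosets of $S_{1}\cap S_{2}$ in $S_{1}\vee S_{2}$.

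The step that carries all the content, and the one I expect to need the most care, is the identity $T_{S_{1}}=T_{S_{1}\cap S_{2}}$: the passage from ``Burnside in $S_{i}$'' to ``$S_{i}$ has the same minimal invariant subtree as $S_{1}\cap S_{2}$''. After that everything is forced, because $S_{1}\vee S_{2}$, large as it may look abstractly, cannot enlarge a tree on which its subgroup $S_{1}\cap S_{2}$ already acts cocompactly. The pitfall to avoid is trying to prove the immersion $\Gamma_{S_{1}\cap S_{2}}\to\Gamma_{S_{1}\vee S_{2}}$ is a covering by a direct local inspection of the folded wedge $\Gamma_{S_{1}}\vee\Gamma_{S_{2}}$; folding can identify vertices in ways that make such bookkeeping awkward, which is exactly why lifting everything to $\tilde X$ and arguing with invariant subtrees is the efficient route.
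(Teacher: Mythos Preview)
Your argument is correct and, at its core, parallels the paper's: both hinge on the fact that the Burnside hypotheses force the immersions $\Gamma_{S_1\cap S_2}\to\Gamma_{S_i}$ to be coverings (your equality $T_{S_1}=T_{S_1\cap S_2}=T_{S_2}$ is exactly this statement lifted to the universal cover), and both then observe that $S_1\vee S_2$ acts freely on the common universal cover of these core graphs, yielding a finite-sheeted covering down from a finite graph. The packaging differs, however. The paper stays entirely inside Stallings' combinatorial framework: it builds $\tilde Z_3$ as the universal cover of the pullback core graph, assembles a pushout of quotients, and routes through an auxiliary map $s$ to $X$ to conclude that $S_1$ (rather than $S_1\cap S_2$ directly) is Burnside in $S_1\vee S_2$, finishing by transitivity. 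Your route via minimal invariant subtrees and axes in $\tilde X$ is the Bass--Serre reformulation of the same picture; it buys you a cleaner endgame, since you read off $[S_1\vee S_2:S_1\cap S_2]<\infty$ in one step without the pushout diagram or the intermediate passage through $S_1$. The paper's version, on the other hand, avoids importing the language of hyperbolic translations and axes, staying self-contained within the graph-immersion toolkit it has set up.
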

\begin{proof}
Similar to the argument of 7.8 in \cite{St}, we start with the following pullback diagram of immersions
\begin{equation*}
\label{dia}\begin{CD}
Z_{3}@>g_{1}>>Z_{1}\\
@VV g_{2}V@V f_{1}VV\\
Z_{2}@>f_{2}>>X.
\end{CD}\end{equation*}\\
where $f_{1}$ and $f_{2}$ are immersions, $Z_{3}$ is a subgraph of the pullback; $Z_{1}$, $Z_{2}$ and $Z_{3}$ are core-graphs; $f_{3}=f_{1}g_{1}=f_{2}g_{2}$; $v_{3}$ is a vertex of $Z_{3}$, $v_{1}$ and $v_{2}$ are the images of $v_{3}$ in $Z_{1}$ and $Z_{2}$; $w=f_{i}(v_{i})$ $(i=1,2,3)$, $F=\pi_{1}(X,w)$, $S_{i}=f_{i_{*}}(\pi_{1}(Z_{i},v_{i}))$  $(i=1,2,3)$. Using Theorem \ref{cap}, $S_{3}=S_{1}\cap S_{2}$.

By Lemma \ref{im} (b), since $S_{3}$ satisfies the Burnside condition both in  $S_{1}$ and $S_{2}$, it follows that the immersions $g_{1}$ and $g_2$ are coverings. 

Let $r:\tilde{Z}_{3}\rightarrow Z_{3}$ be a universal covering, and consider $\tilde{g}_{1}=g_{1}r$ and $\tilde{g}_{2}=g_{2}r$, which are consequently universal coveings. Then by Lemma \ref{le}, $G(\tilde{g}_{1})=S_{1}$, $G(\tilde{g}_{2})=S_{2}$ and the universal coverings $\tilde{g}_{1}:\tilde{Z}_{3} \rightarrow  Z_{1}$ and $\tilde{g}_{2}:\tilde{Z}_{3} \rightarrow  Z_{2}$  are isomorphic to quotient maps $q_{1}:\tilde{Z}_{3} \rightarrow  \tilde{Z}_{3}/G(\tilde{g}_{1})$ and $q_{2}:\tilde{Z}_{3} \rightarrow  \tilde{Z}_{3}/G(\tilde{g}_{2})$, respectively, i.e., there are isomorphisms $\varphi_{1}:\tilde{Z}_{3}/G(\tilde{g}_{1})\rightarrow Z_{1}$ and $\varphi_{2}:\tilde{Z}_{3}/G(\tilde{g}_{2})\rightarrow Z_{2}$ such that $\varphi_{1}q_{1}=\tilde{g}_{1}$ and $\varphi_{2}q_{2}=\tilde{g}_{2}$.  By definition, for any covering transformation $\sigma\in G(\tilde{g}_{1})=S_{1}$, $\tilde{g}_{1}\sigma=\tilde{g}_{1}$, and so $g_{1}r\sigma=g_{1}r$. Hence, $f_{3}r\sigma=f_{3}r=h$. So $\sigma$ is a transformation of the immersion $h$. Similarly, any covering transformation of $\tilde{g}_{2}$ is a transformation of $h$. Now, suppose $K$ is the group of transformations of $h$ generated by $G(\tilde{g}_{1})\cup G(\tilde{g}_{2})$, then $K\cong S_{1}\vee S_{2}$ and  we have the following pushout diagram.
\begin{displaymath}
\xymatrix{
\tilde{Z}_{3} \ar[r]^ {q_{1}} \ar[d]_{q_{2}} \ar[dr]^{t_{3}}&
            \tilde{Z}_{3}/G(\tilde{g}_{1})\ar[d]^{t_{1}}\\
\tilde{Z}_{3}/G(\tilde{g}_{2}) \ar[r]_{t_{2}}         & \tilde{Z}_{3}/K }           
\end{displaymath}

Therefore, because of the universal property of pushout,  for graph $X$ and maps $f_{1}\varphi_{1}$ and $f_{2}\varphi_{2}$ there exists a unique map $s:\tilde{Z}_{3}/K \rightarrow X$ such that $st_{1}=f_{1}\varphi_{1}$ and $st_{2}=f_{2}\varphi_{2}$. 

Since $h$ is an immersion, then $G(h)$ and hence  $K$ acts freely on $\tilde{Z}_{3}$, and so $t_{3}$ is a covering. 
It follows that $t_1$ is a covering too. $\tilde{Z}_{3}/G(\tilde{g}_{1})$ is a finite graph and so  $t_{1}$ is a finite-sheeted covering. Thus by Lemma \ref{im} (a), ${t_{1}}_{*}(\pi_{1}(\tilde{Z}_{3}/G(\tilde{g}_{1})))$ satisfies the Burnside condition in $\pi_{1}(\tilde{Z}_{3}/K)$; so $s_{*}(t_{1_{*}}(\pi_{1}(\tilde{Z}_{3}/G(\tilde{g}_{1}))))={f_{1}}_{*}({\varphi_{1}}_{*}(\pi_{1}(\tilde{Z}_{3}/G(\tilde{g}_{1}))))={f_{1}}_{*}(\pi_{1}(Z_{1}))=S_{1}$ satisfies the Burnside condition in $s_{*}(\pi_{1}(\tilde{Z}_{3}/K))$.
As we can see, $s_{*}(\pi_{1}(\tilde{Z}_{3}/K))$ contains $S_{1}$ and similarly $S_{2}$ and hence it contains $S_{1}\vee S_{2}$. 
Therefore, since $S_{1}\subseteq S_{1}\vee S_{2}\subseteq s_{*}(\pi_{1}(\tilde{Z}_{3}/K))$ and $S_{1}$ satisfies the Burnside condition in $s_{*}(\pi_{1}(\tilde{Z}_{3}/K))$, then  $S_{1}$ also satisfies the Burnside condition in $S_{1}\vee S_{2}$. 
Now,  by the fact that $S_1\cap S_2$ satisfies the Burnside condition in $S_1$, the result holds. 
\end{proof}








\section*{References}

\bibliography{mybibfile}

\end{document}